\documentclass{amsart}
\usepackage{amssymb,amsmath, amsthm,latexsym}
\usepackage{graphics}
\usepackage{todonotes}
\usepackage{amscd}
\usepackage{graphics}
\usepackage{here}
\newcommand{\cal}[1]{\mathcal{#1}}
\theoremstyle{plain}
\newtheorem{theo}{Theorem} 
\newtheorem{cor}[theo]{Corollary}
\newtheorem{lemma}{Lemma}[section]

\newtheorem{proposition}[lemma]{Proposition}
\newtheorem{corollary}[lemma]{Corollary}
 
\theoremstyle{definition}

\newtheorem{remark}[lemma]{Remark}
\parskip=\bigskipamount

\let\phialt=\phi
\let\phi=\varphi
\let\varphi=\phialt

\begin{document}
\title{Cannon-Thurston maps and measures}
\author{Ursula Hamenst\"adt}
\thanks{AMS Subject Classification: 57K32, 37D40}
\date{September 26, 2026}

\begin{abstract}
  We give a short proof of the following extension of a recent result of Gadre, Maher, Pfaff and Uyanik:
  Consider a closed hyperbolic 3-manifold $M$ which fibers over the circle, with 
 fiber a closed surface $S$. 
Any measure class on the ideal boundary $S^2=\partial \mathbb{H}^3$ of hyperbolic 3-space which is invariant and totally ergodic 
under the action of 
the fundamental group of $M$ is 
singular with respect to the push-forward
by a Cannon-Thurston map of a stationary measure on $S^1$ of a random walk with finite first moment
on the fundamental group of $S$.
   \end{abstract}

\maketitle

\section{Introduction}

The fundamental group $\Gamma$ of a closed hyperbolic 3-manifold which fibers over the circle
contains a normal subgroup $\Gamma_0$ which is isomorphic to a surface group. The subgroup $\Gamma_0$
is finitely generated but exponentially distorted in $\Gamma$. In other words, the relation between
the geometry of $\Gamma_0$ and of the ambient group $\Gamma$ is very complicated.

However, the groups $\Gamma_0$ and $\Gamma$ are dynamically related in the following way.
The group $\Gamma_0$ admits an embedding as a lattice into ${\rm PSL}(2,\mathbb{R})$, and via such an
embedding, it acts on the boundary $S^1=\partial \mathbb{H}^2$ of the hyperbolic plane $\mathbb{H}^2$. Similarly, the group $\Gamma$ 
comes with a conjugacy class of an embedding into ${\rm PSL}(2,\mathbb{C})$ and hence it acts on
the boundary $S^2$ of hyperbolic 3-space $\mathbb{H}^3$ 
as a group of M\"obius transformations. By a result of
Cannon and Thurston \cite{CT07}, these two actions are related: there exists a 
continuous, surjective, finite-to-one $\Gamma_0$-equivariant map $F:S^1\to S^2$ (see also \cite{CL26} for more details on this
construction). 

While there are no $\Gamma$-invariant Radon measures on $S^1$ and $S^2$, we can ask for the 
relation between the push-forward under $F$ of 
$\Gamma_0$-invariant measure classes on $S^1$ and $\Gamma$-invariant measure classes
on $S^2$. Important examples of such invariant measure classes arise from stationary measures of random walks
on $\Gamma_0,\Gamma$, respectively. 
The main result of \cite{GMPU26} is as follows.

\begin{theo}[Theorem 4 of \cite{GMPU26}]\label{main1}
  Suppose that $\Gamma$ is the fundamental group of a closed hyperbolic 3-manifold $M$ 
  that fibers over the circle, and $\Gamma_0$ its fiber group. Then the push-forward to
  $S^2$ by a Cannon-Thurston map of the stationary measure on $S^1$ of a random walk on $\Gamma_0$ defined
  by a probability measure whose support generates $\Gamma_0$ and is of finite exponential moment 
  is singular to the stationary measure of a random walk on
  $\Gamma$ defined by a probability measure whose support generates $\Gamma$ and is of finite exponential
  moment. 
\end{theo}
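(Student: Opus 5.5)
The plan is to find a $\Gamma$-equivariant quantity that sees the difference between the two measures. Let $\nu$ be the $\mu$-stationary measure on $S^1$ for the random walk on $\Gamma_0$, and $\lambda = F_*\nu$. Let $\eta$ be the stationary measure on $S^2$ for the random walk on $\Gamma$. The abstract suggests the cleanest route: show that the measure class of $\eta$ is $\Gamma$-invariant and totally ergodic, i.e. ergodic under every finite index subgroup. Then compare it with $\lambda$ using the structure coming from the fibration.

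\emph{Step 1: properties of $\eta$.} The measure $\eta$ is quasi-invariant, since stationarity gives $g_*\eta \ll \eta$ for $g$ in the support of the step distribution, and the support generates $\Gamma$. Its class is ergodic, because $(S^2,\eta)$ is the Poisson boundary (Kaimanovich). Total ergodicity follows from the same fact: the hitting measure of a finite index subgroup lies in the same class, and that boundary is again ergodic.

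\emph{Step 2: invariance of $\lambda$ under the monodromy.} Let $t\in\Gamma$ map to a generator of $\Gamma/\Gamma_0\cong\mathbb{Z}$. Then $t\Gamma_0 t^{-1}=\Gamma_0$, and by equivariance and uniqueness of the Cannon--Thurston map, $tF = F\phi$, where $\phi$ is the boundary homeomorphism of $S^1$ induced by the pseudo-Anosov automorphism. Suppose $\lambda$ is not singular to $\eta$. Then it has a nonzero absolutely continuous part $\lambda_a$, whose class is $\Gamma_0$-quasi-invariant because both $\lambda$ and $\eta$ are $\Gamma_0$-quasi-invariant. The measure $\nu$ is $\Gamma_0$-ergodic, since it is a Poisson boundary. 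Hence $F_*$ of any $\Gamma_0$-invariant set is $\nu$-trivial, and we conclude that $\lambda\ll\eta$, with the Radon--Nikodym derivative positive $\lambda$-a.e.

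Next use the $\Gamma$-quasi-invariance of $\eta$: $t_*\lambda = F_*\phi_*\nu$ is also absolutely continuous to $\eta$. Let $A$ be the support of the density of $\lambda$; it is $\Gamma_0$-invariant. The set $\bigcup_k t^kA$ is then $\Gamma$-invariant and of positive measure, so by ergodicity of $\eta$ it has full measure. I would use total ergodicity, applied to the subgroups generated by $\Gamma_0$ and $t^n$, to upgrade this to: $\lambda$ is equivalent to $\eta$ and $\phi_*\nu$ is equivalent to $\nu$. This uses that $F$ is finite-to-one and injective off a countable union of leaf endpoints. Those endpoints are $\nu$-null, because $\nu$ is nonatomic and gives zero mass to the endpoints of the lifts of the invariant laminations (the stationary measure sees the boundary generically).

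\emph{Step 3: contradiction.} It remains to show that $\phi_*\nu$ is singular to $\nu$ for a pseudo-Anosov $\phi$ and a finite first moment random walk on $\Gamma_0$; this is the main obstacle. I would try it through the drift and entropy, i.e. the fundamental inequality. If $\phi_*\nu \sim \nu$, then $\nu$ is also the hitting measure of the pushed random walk $\phi_*\mu$, with the same entropy, up to the class. Iterating, $\phi^n_*\nu\sim\nu$ for all $n$. Measure $\nu$ with respect to a fixed hyperbolic metric: the Hausdorff (or Guivarc'h) dimension of $\nu$ equals $h/\ell$, where $h$ is the entropy and $\ell$ the drift. The drift of $\phi^n_*\mu$ is at least of order $\lambda_\phi^n$ times the drift of $\mu$ in the word metric, roughly, because generic elements are stretched by the pseudo-Anosov. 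Meanwhile the measure class, and hence the exact dimension, is preserved. Equivalent measures have the same local dimension, while the entropy stays $h$ and the drift grows, which forces the dimension to $0$; this is the contradiction.

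If this dimension argument is too delicate, I would instead use the alternative of a Tukia-type rigidity. A measure class on $S^1$ invariant under both $\Gamma_0$ and the non-$C^1$ (quasisymmetric but singular) conjugacy $\phi$ can be neither absolutely continuous nor invariant without forcing $\phi$ to be Möbius, which is impossible.
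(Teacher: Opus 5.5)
Your Steps 1--2 follow the paper's reduction. Total ergodicity of $\eta$, almost-everywhere injectivity of $F$, and $tF=F\rho(\phi)$ reduce the theorem to showing $\nu\perp\rho(\phi^m)_*\nu$ for a pseudo-Anosov $\phi^m$. Note that total ergodicity does not give $\lambda\sim\eta$ or $\phi_*\nu\sim\nu$. It gives that $A\cap t^mA$ is non-null for some $m\neq 0$ (otherwise $\bigcup_k t^{2k}A$ is a nontrivial $\langle\Gamma_0,t^2\rangle$-invariant set), hence $\rho(\phi^m)_*\nu\sim\nu$, which is enough.

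However, injectivity of $F$ off a $\nu$-null set is only asserted. The non-injectivity locus of $F$ consists of endpoints of lifts of leaves of the stable and unstable laminations. This set is uncountable, so nonatomicity of $\nu$ says nothing, and ``the stationary measure sees the boundary generically'' is not an argument. The paper needs Proposition~\ref{nonsimplefull} for exactly this. The flow-invariant measure coming from a current is ergodic with full support, so rays towards typical points are dense in $T^1S_g$. A ray asymptotic to a simple geodesic instead eventually stays near the compact set $T^1\mathcal{L}$.

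The decisive gap is Step 3, the heart of the theorem. Your dimension strategy differs from the paper's, and its framework is sound. $\rho(\phi^n)_*\nu$ is the $\phi^n_*\mu$-stationary measure, $h(\phi^n_*\mu)=h(\mu)$, such measures are exact dimensional with $\dim=h/\ell$, and local dimension is a measure-class invariant. But this yields a contradiction only if $\ell(\phi^n_*\mu)\to\infty$, and you give no proof of that. It is not a formal consequence of ``stretching''. The available uniform estimate is $\ell_X(\phi^n\gamma)\geq c\,\lambda_\phi^n\, i(\gamma,\xi_-)$, which follows because $\phi^{-n}X$ is isometric to $X$ and $i(\alpha,\beta)\le K\ell_Y(\alpha)\ell_Y(\beta)$. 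Since $i(\gamma,\xi_-)/\ell_X(\gamma)$ is not bounded below on $\Gamma_0$, what you actually need is $\limsup_k \frac1k\, i(w_k,\xi_-)>0$ for the sample path $w_k$, i.e.\ that random walk paths cross the filling lamination at linear rate. Proving this requires Kaimanovich's ergodic, fully supported flow-invariant measure in the class $\check\nu\times\nu$, the positivity of its intersection with $\xi_-$, and an ergodic theorem along the tracked geodesic. That is precisely the input of the paper, which then concludes with no dimension theory. If $[\nu]=[\rho(\psi)_*\nu]$ and $c$ is a current in the class $[\nu]\times[\nu]$, Hopf ergodicity gives $\psi_*c=bc$, and invariance of $\iota(c,c)>0$ gives $b=1$. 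Then $\iota(\xi,c)=a\,\iota(\xi,c)$ forces $\iota(\xi,c)=0$, which is impossible since $\xi$ fills. If completed, your route would have the merit of not needing a current in the class $[\nu]\times[\nu]$, which the paper gets from symmetry of the walk.

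Finally, the Tukia-type fallback cannot work. Corollary~\ref{noin} produces a $\Gamma_0$-ergodic measure class on $S^1$, the pull-back under $F$ of the Lebesgue class on $S^2$. This class is invariant under $\rho(\psi)$ because the Lebesgue class is $\Gamma$-invariant. So any proof must use a specific property of stationary measures.
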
  

Our goal is to give a short proof of the following strengthened version of Theorem \ref{main1}. For its formulation, we consider a
$\Gamma_0$-invariant ergodic measure class $[\mu]$ on $S^1$, that is, a measure class for which the action of $\Gamma_0$ is nonsingular.
We say that $[\mu]$ 
 is \emph{induced by a current} 
if there exists a \emph{geodesic current}, that is, a $\Gamma_0$-invariant Radon measure $\nu$ on $(S^1\times S^1)\setminus \Delta$, 
which defines the measure class of the product $[\mu]\times [\mu]$. 

In the statement of the following result, we call a nonsingular ergodic action of a group $G$ on a measure space 
\emph{totally ergodic} if the action of every finite index subgroup of $G$ is still ergodic. This is equivalent to
stating that the action does not admit any finite factors.

\begin{theo}\label{main2}
  Under the assumption of Theorem \ref{main1}, the push-forward by a Cannon-Thurston map of a $\Gamma_0$-invariant 
measure class on $S^1$ induced by a current 
is singular to any $\Gamma$-invariant measure class on $S^2$ for which the $\Gamma$-action is totally ergodic.  
\end{theo}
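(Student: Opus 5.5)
Suppose, for contradiction, that $F_*[\mu]$ is not singular to a $\Gamma$-invariant measure class $[\lambda]$ on $S^2$ on which $\Gamma$ acts totally ergodically. Write $\Gamma=\Gamma_0\rtimes\langle t\rangle$, where $t$ acts on $\Gamma_0$ by the pseudo-Anosov monodromy $\phi$. Replacing $\mu$ by its restriction to a positive measure set, we may assume $F_*\mu\ll\lambda$. The plan is to exploit the $t$-action. Since $t$ normalizes $\Gamma_0$, the measure class $t_*F_*[\mu]=F_*[\phi_*\mu]$ is again $\Gamma_0$-invariant, where $\phi$ acts on $S^1$ via the boundary extension of the lift of the monodromy. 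Moreover $F\circ\phi=t\circ F$. The total ergodicity hypothesis will force $[\lambda]$ restricted to $\Gamma_0$ to be essentially one measure class that is preserved by $t$. We will then show that this is incompatible with $[\mu]$ being induced by a current.

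\textbf{Step 1: from total ergodicity to $t$-invariance of $F_*[\mu]$.} Consider the $\Gamma_0$-action on $(S^2,[\lambda])$ and its ergodic decomposition. Since $\Gamma_0$ is normal, $\Gamma/\Gamma_0\cong\mathbb{Z}$ acts on the space of ergodic components, and this action is ergodic because $\lambda$ is $\Gamma$-ergodic. The class $F_*[\mu]$ is $\Gamma_0$-ergodic and absolutely continuous to $\lambda$, so it is carried by a positive-measure family of components. I will argue that it in fact sits in a single ergodic component, which then has positive $\lambda$-mass, i.e. the component space has an atom. In that case the $\mathbb{Z}$-action permutes finitely many atoms, because ergodicity together with finite total mass in a probability normalization forces a finite orbit. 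Total ergodicity then excludes a nontrivial finite factor, so $\Gamma_0$ acts ergodically on $(S^2,\lambda)$, $[\lambda]=F_*[\mu]$, and consequently $F_*[\phi_*\mu]=F_*[\mu]$. The fact that $F$ is finite-to-one then gives $[\phi_*\mu]=[\mu]$ up to a finite ambiguity, which can be removed by passing to a power $\phi^k$.

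\textbf{Step 2: contradiction with the current.} Let $\nu$ be the geodesic current in the class $[\mu]\times[\mu]$. From Step 1, $\phi^k_*\nu$ lies in the same class as $\nu$. Both measures are $\Gamma_0$-invariant Radon measures, and $\nu$ is ergodic since the diagonal action on $[\mu]\times[\mu]$ is ergodic by the Hopf argument for currents. Hence $\phi^k_*\nu=c\,\nu$ for some constant $c>0$. For a pseudo-Anosov map, the projectivized action on the space of currents has only two fixed points, namely the stable and unstable laminations, with stretch factors $\lambda^{\pm k}$. If $c\neq1$, then $\nu$ is supported on the stable or unstable lamination, whose endpoint set has product measure zero, contradicting the fact that $\nu$ is equivalent to a product class. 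If $c=1$, then $\nu$ is a $\phi^k$-invariant current. This contradicts the north–south dynamics of $\phi^k$ on the space of projective currents, since $[\phi^{kn}\nu]$ must converge to the unstable lamination class.

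\textbf{Main obstacle.} The delicate point is Step 1: showing that an ergodic $\Gamma_0$-class dominated by $\lambda$ must be a single atom of the component space rather than lying in a continuum of components. I would first try to use the current structure here, via a Hopf-type argument for $F_*\mu\times F_*\mu$ on $S^2\times S^2$ minus the diagonal. This could show that $\lambda\times\lambda$ restricted near $F_*\mu\times F_*\mu$ is ergodic under the diagonal $\Gamma_0$-action, which would force $F_*\mu$ to absorb the entire component.
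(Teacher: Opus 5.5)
Your architecture is essentially the paper's argument run in the opposite direction: the paper first shows $F_*[\mu]\perp t_*F_*[\mu]$ and then exhibits a finite factor. However, Step 1 contains a false claim, and the pull-back step is asserted rather than proved.

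First, the $t$-orbit of the atom $A$ of the $\Gamma_0$-ergodic decomposition need not be finite. The measure $\lambda$ is only quasi-invariant under $t$ (there is no $\Gamma$-invariant finite measure on $S^2$), so $\lambda(t^jA)$ may decay and infinitely many pairwise disjoint translates fit into a probability space. By the paper's results, this is precisely what happens for any $\Gamma$-ergodic class not singular to $F_*[\mu]$. Your conclusion still holds: an infinite orbit makes $\bigcup_{j}t^{2j}A$ a nontrivial $\langle\Gamma_0,t^2\rangle$-invariant set (this is the paper's final step). So total ergodicity excludes infinite orbits as well as finite orbits of length $>1$.

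On the other hand, what you call the main obstacle is automatic. Since $F_*\mu$ is $\Gamma_0$-ergodic, non-singularity already gives $F_*\mu\ll\lambda$. (Restricting $\mu$ to a non-invariant subset, as you propose, would destroy quasi-invariance.) Then $A=\{dF_*\mu/d\lambda>0\}$ is $\Gamma_0$-invariant mod null sets and $\lambda|_A\sim F_*\mu$ is ergodic. No Hopf argument on $S^2\times S^2$ is needed.

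The genuine difficulty sits in the step you pass over. Finite-to-one alone does not let you pull $F_*[\phi_*\mu]=F_*[\mu]$ back to $S^1$, because disjoint carriers of $\mu$ and $\phi_*\mu$ may meet common fibers of $F$. The paper handles this with Proposition \ref{nonsimplefull}: $[\mu]$ gives no mass to endpoints of lifts of simple geodesics, so $F$ is injective on a conull set.

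Your ``finite ambiguity'' can instead be made rigorous by pigeonhole. The classes $[\phi^j_*\mu]$, $j\geq 0$, are $\Gamma_0$-ergodic, hence pairwise equal or singular, and all push forward to $F_*[\mu]$. If they were pairwise singular, choose pairwise disjoint carriers $E_j$. Then $\bigcap_j F(E_j)$ is $F_*\mu$-conull, hence nonempty, and each of its points has infinitely many preimages, which is impossible. So $[\phi^k_*\mu]=[\mu]$ for some $k\geq 1$. Written out, this is a lighter route that bypasses Proposition \ref{nonsimplefull} and its Birkhoff-regularity argument altogether.

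In Step 2, north--south dynamics of pseudo-Anosovs on projectivized currents is a correct but heavier external input, and it makes your case split $c\neq 1$, $c=1$ redundant. The paper gets the same contradiction self-containedly from the intersection form. Invariance of $\iota$ and $\iota(\nu,\nu)>0$ force $c=1$. Then $\iota(\xi,\nu)=a\,\iota(\xi,\nu)$ forces $\iota(\xi,\nu)=0$, so $\nu$ is carried by the filling lamination $\xi$, contradicting $\iota(\nu,\nu)>0$.
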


Since a stationary 
measure on $S^1$ and $S^2$ 
of a symmetric random walk on $\Gamma_0$ and $\Gamma$ generated by a probability measure of finite first moment 
whose support generates the group 
is totally ergodic and induced by a current 
\cite{K94}, we obtain as an immediate corollary.

\begin{cor}\label{walk}
Under the assumption of Theorem \ref{main1}, the push-forward by a Cannon-Thurston map of a stationary measure  on 
$S^1$ of a symmetric random walk on $\Gamma_0$ with finite first moment is singular to a stationary  measure on $S^2$ of a 
random walk on $\Gamma$ with finite first moment. 
\end{cor}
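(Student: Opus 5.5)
Corollary \ref{walk} is meant to follow directly from Theorem \ref{main2}. The plan is to check its two hypotheses: the measure class on $S^1$ must be induced by a current, and the $\Gamma$-action on the $S^2$ class must be totally ergodic. Then Theorem \ref{main2} gives the conclusion.

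\emph{The measure on $S^1$.} Let $\mu_0$ be the stationary measure on $S^1$ of a symmetric random walk on $\Gamma_0$ with finite first moment, whose support generates $\Gamma_0$. By symmetry, the stationary measure of the reflected walk is $\mu_0$ itself. Kaimanovich's construction \cite{K94} then gives a $\Gamma_0$-invariant Radon measure $\nu$ on $(S^1\times S^1)\setminus\Delta$. This $\nu$ has the form $\theta(\xi,\eta)\,d\mu_0(\xi)\,d\mu_0(\eta)$, where the density $\theta$ is positive and locally bounded off the diagonal. This is where finite first moment is used: it identifies $(S^1,\mu_0)$ with the Poisson boundary and makes the Gromov-product density a well-defined Radon density. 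So $\nu$ is a geodesic current in the class $[\mu_0]\times[\mu_0]$. Ergodicity of $[\mu_0]$ under $\Gamma_0$ is standard for stationary measures on the boundary of a hyperbolic group.

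\emph{The measure on $S^2$.} Let $\lambda$ be the stationary measure on $S^2$ of a random walk on $\Gamma$ with finite first moment, whose support generates $\Gamma$. Kaimanovich \cite{K94} shows that $(S^2,\lambda)$ is the Poisson boundary of the walk, and that the $\Gamma$-action on the square $(S^2\times S^2,\lambda\times\check\lambda)$ is ergodic, where $\check\lambda$ is the stationary measure of the reflected walk. We need more: every finite index subgroup $\Gamma'<\Gamma$ acts ergodically on $(S^2,[\lambda])$.

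I would derive this from the square: $\lambda$ and $\check\lambda$ need not agree, so invoking the $\Gamma$-invariant current in $[\lambda]\times[\lambda]$ would need symmetry. Suppose $A\subset S^2$ is $\Gamma'$-invariant with $0<\lambda(A)<1$. Since $\Gamma'$ has finite index, the translates $gA$ realize only finitely many sets up to null sets. These generate a finite $\Gamma$-invariant partition of $S^2$ into atoms of positive measure, which $\Gamma$ permutes transitively. Then the union $\bigcup_i P_i\times P_i$ over the atoms $P_i$ is a $\Gamma$-invariant subset of $S^2\times S^2$ of intermediate $\lambda\times\check\lambda$-measure. This is possible only if $\check\lambda(P_i)>0$ for every $i$, which holds: $\check\lambda$ is quasi-invariant with full support, and $\Gamma$ permutes the $P_i$ transitively. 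This contradicts ergodicity of the square.

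\emph{Main difficulty.} The main care is in this total-ergodicity step. There are two points to check. First, that the ergodicity of $(S^2\times S^2,\lambda\times\check\lambda)$ in \cite{K94} really holds under only a finite first moment on $\Gamma$. Second, that the finite partition argument is airtight at the level of measure classes. Once both hypotheses are verified, Theorem \ref{main2} applies and yields the singularity asserted in Corollary \ref{walk}.
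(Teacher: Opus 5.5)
Your overall route is the paper's: verify the two hypotheses of Theorem \ref{main2} via \cite{K94} and apply it. Your $S^1$ part is fine. Only the Radon property of Kaimanovich's invariant measure in $[\mu_0]\times[\mu_0]$ is needed there; a locally bounded Gromov-product density is more than finite first moment gives, but you do not use it.

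\textbf{The gap is in your total-ergodicity step for a non-symmetric walk on $\Gamma$.} Full support of $\check\lambda$ does not give $\check\lambda(P_i)>0$. The $P_i$ are merely measurable sets, and for non-symmetric walks nothing you use excludes $\lambda$ and $\check\lambda$ being mutually singular. In that case the $P_i$, defined modulo $\lambda$-null sets, are not even defined modulo $\check\lambda$-null sets. Even with strictly $\Gamma'$-invariant representatives (e.g.\ $\bigcap_{\gamma\in\Gamma'}\gamma A$) one can have $\sum_i\lambda(P_i)\check\lambda(P_i)=0$, and then there is no contradiction.

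This cannot be patched within your approach, because ergodicity of the square under $\Gamma$ alone does not exclude finite factors of $\lambda$. Let $\pi:(S^2,\lambda)\to\Gamma/\Gamma_1$ be a finite factor. Then $\pi\times\mathrm{id}$ maps $(S^2\times S^2,\lambda\times\check\lambda)$ onto $\Gamma/\Gamma_1\times(S^2,\check\lambda)$. Ergodicity of this space is equivalent to ergodicity of $\Gamma_1$ on $(S^2,\check\lambda)$, which is a statement about $\check\lambda$ and contradicts nothing unless $[\check\lambda]=[\lambda]$. So your argument works exactly in the symmetric case. That is also the only case for which the paper invokes \cite{K94} in this step.

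\textbf{What is missing is an argument that uses the walk on $\Gamma$ directly.} Let $m$ be the step distribution and $X_n$ the walk on $\Gamma$, and let $A$ be $\Gamma'$-invariant. Then $h(g)=\lambda(g^{-1}A)$ satisfies $h(X_n)=P(X_\infty\in A\mid X_1,\dots,X_n)$. So $h(X_n)\to 1_A(X_\infty)$ almost surely by martingale convergence, while $h\equiv\lambda(A)$ on $\Gamma'$. The image of the walk in the finite set $\Gamma'\backslash\Gamma$ is an irreducible Markov chain: the support of $m$ generates $\Gamma$, and a semigroup of permutations of a finite set is a group. Hence $X_n\in\Gamma'$ infinitely often almost surely. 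It follows that $1_A(X_\infty)=\lambda(A)$ almost surely, i.e.\ $\lambda(A)\in\{0,1\}$, with no symmetry assumption.

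Alternatively, apply Kaimanovich's double ergodicity to the first-return walk of $m$ on $\Gamma'$. This walk has the same hitting measure $\lambda$. It still has finite first moment by Wald's identity. Its reflection is the first-return walk of $\check m$. Double ergodicity for $\Gamma'$ on $\lambda\times\check\lambda$ then kills any $\Gamma'$-invariant $A\times S^2$.
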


A much more general version of Theorem \ref{main1} which applies to infinite index subgroups of groups with sufficient hyperbolicity
was established in \cite{KZ25}. Corollary 1.6 of \cite{KZ25} discusses explicitly fiber subgroups of fundamental groups of closed fibered
hyperbolic 3-manifolds. As the surface group $\Gamma_0$ is an exponentially distorted subgroup of $\Gamma$, 
the moment condition for the walk on the surface group required in \cite{KZ25} is a bit weaker than finite first moment for a word metric and hence
the result in \cite{KZ25} is a bit stronger than Corollary \ref{walk}. 
The subtlety of moment conditions is illustrated by Proposition 6.1 of \cite{KZ25} 
which shows that the measure class of a stationary measure on $S^2$ for a symmetric random walk on $\Gamma$ of finite first moment 
can in fact be realized by a stationary measure for a symmetric random walk on 
$\Gamma_0$, however the moment condition seems to be at most logarithmic.
We do not use random walks in our approach.

We also obtain the following application. 

\begin{cor}\label{nonextendable}
A $\Gamma_0$-invariant measure class $[\mu]$ on $S^1$ constructed from the Lebesgue measure on $S^2$ and 
a Cannon-Thurston map has the following properties.
\begin{enumerate}
\item The measure class $[\mu]\times [\mu]$ is ergodic under the diagonal action of $\Gamma_0$. 
\item There is no geodesic current in the measure class of $[\mu]\times [\mu]$. 
\end{enumerate} 
\end{cor}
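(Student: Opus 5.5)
The plan is to derive both properties from Theorem \ref{main2} and from the ergodic theory of the Lebesgue measure class $[\lambda]$ on $S^2$. I will work with the following concrete description of $[\mu]$. The Cannon--Thurston map $F:S^1\to S^2$ is finite-to-one. Let $E\subset S^2$ be the set of points with more than one preimage under $F$. I will show that $E$ is $\lambda$-null. Once this holds, $F$ is a measurable bijection between conull sets, and $[\mu]$ is the class of the pull-back $F^{-1}_*\lambda$. In particular $F_*[\mu]=[\lambda]$, and $F\times F$ is a measure class isomorphism, modulo null sets, between $(S^1\times S^1,[\mu]\times[\mu])$ and $(S^2\times S^2,[\lambda]\times[\lambda])$. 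It is equivariant for $\Gamma_0$.

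The first step is to show that $E$ is null. Since $F$ is $\Gamma$-equivariant, where $\Gamma$ acts on $S^1$ through the monodromy, the set $E$ is $\Gamma$-invariant. The action of the lattice $\Gamma$ on $(S^2,[\lambda])$ is ergodic, so $E$ is either null or conull. I will exclude the conull case as follows. The points of $E$ are endpoints of leaves of the stable and unstable ending laminations, pushed to $S^2$. A geodesic ray in $\mathbb{H}^3/\Gamma_0$ ending at such a point stays in a neighbourhood of one end of the infinite cyclic cover, so it is not recurrent. On the other hand, $\lambda$-almost every point is the endpoint of a recurrent ray, by recurrence of the geodesic flow on the $\mathbb{Z}$-cover discussed below. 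I expect this step to be the main technical obstacle, since it requires a precise description of the non-injectivity locus of $F$.

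For property (1), the isomorphism above reduces ergodicity of $[\mu]\times[\mu]$ to ergodicity of the diagonal action of $\Gamma_0$ on $(S^2\times S^2\setminus\Delta,[\lambda]\times[\lambda])$. By the Hopf correspondence, this is equivalent to ergodicity of the geodesic flow on the unit tangent bundle of the infinite cyclic cover $\mathbb{H}^3/\Gamma_0$ of $M$, with respect to Liouville measure. For $\mathbb{Z}$-covers of compact hyperbolic manifolds, the geodesic flow is conservative and ergodic; this is the Rees/Guivarc'h theorem, using that $\mathbb{Z}$ has recurrent random walks. I will cite that result, and this gives (1).

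For property (2), suppose there were a geodesic current $\nu$ in the class $[\mu]\times[\mu]$. Then $[\mu]$ is induced by a current, so by Theorem \ref{main2} the class $F_*[\mu]=[\lambda]$ is singular to every $\Gamma$-invariant measure class on $S^2$ for which the $\Gamma$-action is totally ergodic. However, $[\lambda]$ is itself such a class. Every finite index subgroup of $\Gamma$ is again a cocompact lattice in ${\rm PSL}(2,\mathbb{C})$. Its action on $(S^2,[\lambda])$ is ergodic, by Hopf's argument (ergodicity of the geodesic flow of the corresponding finite cover of $M$), and hence the $\Gamma$-action on $[\lambda]$ is totally ergodic. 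A nonzero measure class is not singular to itself, so this is a contradiction and no such current exists.
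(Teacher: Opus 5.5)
Your argument follows the paper's proof of this corollary in its main steps. For (1) you transport the ergodicity of the diagonal $\Gamma_0$-action on $[\lambda]\times[\lambda]$ through $F\times F$; this ergodicity is that of the geodesic flow on the $\mathbb{Z}$-cover, which the paper also cites. For (2) you apply Theorem \ref{main2} to $[\lambda]$, using that $[\lambda]$ is totally ergodic under $\Gamma$. Note that Theorem \ref{main2} is meant for ergodic classes, so you should remark that $\Gamma_0$-ergodicity of $[\mu]$ follows from (1).

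The real difference is your first step. The paper does not prove that $\lambda$ gives full mass to the points with a single $F$-preimage; it cites Proposition 5.5 of \cite{FMP26}. You instead argue that the points of $E$ are non-conical and that conical points are $\lambda$-conull.

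The second claim is fine. The Liouville measure on $T^1(\mathbb{H}^3/\Gamma_0)$ is conservative by Rees' theorem, so $\lambda$-a.e. point is a conical limit point of $\Gamma_0$.

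The first claim, which you flag as the main obstacle, is only asserted in your proposal. It has a short proof that needs no description of the ending laminations. Suppose $z=F(x)=F(y)$ with $x\neq y$ were conical. Then there are $g_n\in\Gamma_0$ and points $a\neq b$ with $g_nz\to a$ and $g_nw\to b$ for all $w\neq z$. Passing to a subsequence, the convergence action on $S^1$ gives $c,d\in S^1$ with $g_n\to c$ locally uniformly on $S^1\setminus\{d\}$.
\begin{itemize}
\item Some preimage of $z$ differs from $d$, since there are at least two. Applying equivariance and continuity to it gives $F(c)=a$.
\item Any $w'\notin F^{-1}(z)\cup\{d\}$ gives $F(c)=b$ in the same way.
\end{itemize}
This contradicts $a\neq b$.

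With this filled in, your Step 1 becomes a self-contained replacement for the citation. It rests only on Rees' theorem and convergence dynamics, and it proves slightly more: every conical point has exactly one preimage. The paper's route is shorter but depends on \cite{FMP26}. The null/conull dichotomy from $\Gamma$-ergodicity is then unnecessary, because $E$ already lies in the null set of non-conical points.
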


While the proof of Theorem \ref{main1} is geometric \cite{GMPU26}, the proof of Theorem \ref{main2} 
only uses dynamical tools, and it is completely different from the approach in \cite{GMPU26} as well as from the tools used in \cite{KZ25}.

\noindent
{\bf Acknowledgement:} This note was motivated by a conversation of the author with Vaibhav Gadre during a conference at
IMPA, Rio de Janeiro, in August 2026, and it benefitted from conversations with Rafael Potrie at the same occasion. 
The author thanks IMPA for its hospitality. I am also grateful to Dongryul Kim for pointing out the article \cite{KZ25}. 
ChatGPT 5.6 was used as an editorial aid to identify 
typographical errors and, in a few places, passages whose wordings could be clarified. 
The mathematical content and all final revisions are the intellectual responsibility of the author.

\section{The circle action of ${\rm Mod}(S_{g,1})$}\label{sec:circle}

Denote by $S_{g,1}$ a surface of genus $g\geq 2$ with one marked point, and 
by $S_g$ the closed surface of genus $g$ with no marked point. Let ${\rm Mod}(S_{g,1})$ and 
${\rm Mod}(S_g)$ be the \emph{mapping class group} of $S_{g,1}$ and $S_g$, respectively, which 
can be identified with the group of automorphisms and outer automorphisms of $\pi_1(S)$.
These groups  fit into
the \emph{Birman exact sequence}
\begin{equation}\label{birman}
0\to \pi_1(S_g)\to {\rm Mod}(S_{g,1})\xrightarrow{\Pi} {\rm Mod}(S_g)\to 0.\end{equation}
This sequence identifies ${\rm Mod}(S_{g,1})$ with the automorphism group of $\pi_1(S_g)$.

The \emph{Teichm\"uller space} ${\cal T}(S_{g,1})$ of $S_{g,1}$ 
is the bundle  over the 
Teichm\"uller space ${\cal T}(S_g)$ of marked hyperbolic structures on $S_g$ whose  fiber over a marked hyperbolic  surface $X\in {\cal T}(S_g)$
is the universal covering  of $X$, which is naturally identified with the hyperbolic plane $\mathbb{H}^2$. The mapping class group
${\rm Mod}(S_{g,1})$ acts properly discontinuously
from the left on ${\cal T}(S_{g,1})$ as a group of bundle diffeomorphisms. 
The subgroup $\pi_1(S_g)$ preserves each fibers and acts cocompactly on it. Thus 
$\pi_1(S_g)\backslash {\cal T}(S_{g,1})$ is a fiber bundle over ${\cal T}(S_g)$ whose fiber over a point $X$ is just the 
hyperbolic surface $X$. In other words, this quotient is the \emph{universal curve} over ${\cal T}(S_g)$. 

The unit tangent bundle 
$T^1\mathbb{H}^2$ of $\mathbb{H}^2$ is a $\pi_1(S_g)$-invariant circle bundle on the fiber of ${\cal T}(S_{g,1})$ over $X$. 
The union of these circle bundles defines a smooth circle
bundle over ${\cal T}(S_{g,1})$ which is invariant under the action of $\pi_1(S_g)$ and descends to the 
\emph{vertical unit tangent bundle} of the fibers of the 
universal curve.
By naturality of this construction, equivalently by the Birman exact sequence, 
the circle bundle over ${\cal T}(S_{g,1})$ is in fact invariant under the whole mapping class group 
${\rm Mod}(S_{g,1})$, and it descends to the vertical tangent bundle of the universal curve over the 
moduli space of pointed hyperbolic surfaces of genus $g$.

It was discovered by Morita \cite{Mo88} that this circle bundle is \emph{flat}, that is, 
there exists a homomorphism $\rho:{\rm Mod}(S_{g,1})\to {\rm Homeo}(S^1)$ such that the circle bundle can 
be represented as ${\rm Mod}(S_{g,1})\backslash {\cal T}(S_{g,1})\times S^1$ where the action of 
${\rm Mod}(S_{g,1})$ on $S^1$ is via the homomorphism $\rho$. 
We next review the construction of the homomorphism $\rho$ following
Section 3 of \cite{H21} which takes a more geometric viewpoint than Morita's article.  

Let $D\subset \mathbb{C}$ be the unit disk, viewed as the Poincar\'e model of the hyperbolic plane $\mathbb{H}^2$. 
A marked hyperbolic surface $X\in {\cal T}(S_g)$ is defined by a discrete faithful homomorphism
$\psi_X:\pi_1(S)\to {\rm PSL}(2,\mathbb{R})$. 
Thus $X$ is the quotient of $D$ by the action of the image group $\psi_X(\pi_1(S))$.
Since ${\rm PSL}(2,\mathbb{R})$ equals the group of orientation preserving isometries of $\mathbb{H}^2$, it 
acts simply transitively on 
the unit tangent bundle $T^1\mathbb{H}^2$ of $\mathbb{H}^2$ and hence the choice of a unit tangent vector at $0\in D$ determines an 
identification of ${\rm PSL}(2,\mathbb{R})$ with $T^1\mathbb{H}^2$.

Let $x\in X$ be the projection of $0\in D$.
The group ${\rm Mod}(S_{g,1})$ can also be identified with the group of isotopy classes 
of diffeomorphisms of $S_g$ fixing the marked point $x\in X=S_g$, where the isotopy is required to fix $x$ as well. 
In this way, the fiber subgroup $\pi_1(S)$ of ${\rm Mod}(S_{g,1})$ of the Birman exact sequence (\ref{birman}),  consisting 
of inner automorphisms of $\pi_1(S)$, is identified with the \emph{point pushing group}.    

Any diffeomorphism of $S_g$ fixing $x$ is a bi-Lipschitz map for the hyperbolic structure $X$ on $S_g$ and hence 
it lifts to a $\pi_1(S_g,x)$-equivariant bi-Lipschitz map $\tilde f:\mathbb{H}^2=D\to \mathbb{H}^2=D$. Since $f$ fixes $x$, this
lift can be chosen to fix $0$.
Such a bi-Lipschitz map maps geodesic rays starting at $0$ to uniform quasi-geodesic rays, and any 
such quasi-geodesic ray is at uniformly bounded distance from a geodesic ray. As the circle $S^1=\partial D$ 
can naturally be identified with the space of geodesic rays starting at $0$, this construction then associates to a bi-Lipschitz
homeomorphism $f$ of $X$ fixing $x$ a homeomorphism $\rho(f)$ of $S^1$, and this homeomorphism only depends on the 
isotopy class of $f$ fixing $x$. Moreover, it is H\"older continuous.
This construction then defines the homomorphism
$\rho:{\rm Mod}(S_{g,1})\to {\rm Homeo}(S^1)$. Its restriction to $\pi_1(S_g)=\pi_1(X)$ is just the standard 
action of $\pi_1(X)$ on the ideal boundary of the universal covering $\mathbb{H}^2$ of $X$ (see Section 3 of \cite{H21}). 
If $\psi\in {\rm Mod}(S_{g,1})={\rm Aut}(\pi_1(S_g))$ is arbitrary, then $\psi$, viewed as an isotopy class of a diffeomorphism of $X$, 
conjugates the action of $\pi_1(X,x)$ on $S^1$ 
by the  H\"older continuous homeomorphism $\rho(\psi)$.

From now on we denote by $\Gamma_0$ the fundamental group of the hyperbolic surface $X$, with universal covering 
$\mathbb{H}^2$. Let $\psi\in {\rm Mod}(S_{g,1})$ be an element so that $\Pi(\psi)$ is a pseudo-Anosov mapping class. 
Then there is a pair $\xi_+,\xi_-$ of \emph{measured geodesic laminations} and a number $a>1$ so that 
$\psi \xi_+=a\xi_+$ and $\psi\xi_-=a^{-1}\xi_-$. This means the following. The measured lamination $\xi_+$ is a
\emph{geodesic current}, that is, a $\Gamma_0$-invariant Radon measure on the space $S^1\times S^1\setminus \Delta$
of geodesics in $\mathbb{H}^2$ 
whose support consists of \emph{simple} geodesics, equipped with a \emph{transverse measure}. 
The mapping class $\psi$ preserves this support and
multiplies the transverse measure by $a$. 

Note that ${\rm Mod}(S)$  acts on the space ${\rm Curr}$ of geodesic currents,
equipped with with the weak$^*$ topology, as a group
of transformations. 
There exists a continuous, symmetric, ${\rm Mod}(S_{g})$-invariant 
\emph{intersection form} $\iota:{\rm Curr}\times {\rm Curr}\to [0,\infty)$, see \cite{Bo88}. 

\begin{lemma}\label{positiveint}
If $\nu$ is a current defining the measure class of a product measure, then $\iota(\nu,\nu)>0$. 
\end{lemma}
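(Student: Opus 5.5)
The idea is to use Bonahon's description of the intersection form. Recall that $\iota(\nu,\nu)$ is the $\nu\times\nu$-measure of a fundamental domain for the diagonal $\Gamma_0$-action on the space of pairs of transversely intersecting geodesics. Concretely, let ${\cal I}\subset ((S^1\times S^1)\setminus\Delta)^2$ be the set of pairs of geodesics $((a,b),(c,d))$ whose endpoints are pairwise distinct and \emph{linked}: $c$ and $d$ lie in different components of $S^1\setminus\{a,b\}$. Then $\Gamma_0$ acts properly discontinuously on ${\cal I}$, since it acts properly on the intersection points in $\mathbb{H}^2$, and
\[
\iota(\nu,\nu)=(\nu\times\nu)(\Gamma_0\backslash {\cal I}).
\]
It therefore suffices to show that $(\nu\times\nu)({\cal I})>0$. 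A set of positive measure in ${\cal I}$ projects to a set of positive measure in the quotient, because ${\cal I}$ is a countable union of translates of a fundamental domain.

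Next we use the hypothesis that $\nu$ lies in the measure class $[\mu]\times[\mu]$ for some measure $\mu$ on $S^1$. The measure $\mu$ is not the zero measure, since $\nu$ is a nonzero current. I claim that $\mu$ has no atoms. Indeed, if $p$ were an atom, then $\nu(\{p\}\times (S^1\setminus\{p\}))>0$. The $\Gamma_0$-orbit of $p$ is infinite, and its points are all atoms because the measure class is $\Gamma_0$-invariant. Choose a compact set of the form $\{q\}\times J$, where $J$ is an arc with $\mu(J)>0$, and which contains infinitely many sets $\{gp\}\times J$ with $gp\neq q$... A cleaner route is to avoid the case distinction altogether. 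If there is an atom $p$, then the orbit $\Gamma_0 p$ accumulates at some point, and the sets $\{gp\}\times J$ for a fixed arc $J$ of positive measure disjoint from a neighborhood of the accumulation point have measures that need not be bounded below, so this does not immediately contradict the Radon property. Since the question of atoms is therefore delicate, I would instead argue directly using the support.

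Let $K=\mathrm{supp}\,\mu$. This is a closed $\Gamma_0$-invariant nonempty subset of $S^1$, so $K$ is either all of $S^1$ or a Cantor set, because the limit set of $\Gamma_0$ is the whole circle. In either case $K$ is infinite, so we can choose four distinct points $a,c,b,d\in K$ in this cyclic order. Take small pairwise disjoint open arcs $A,B,C,D$ around these points, each of positive $\mu$-measure. For any $x\in A$, $y\in B$, $z\in C$, $w\in D$, the geodesics $(x,y)$ and $(z,w)$ are linked, so $(A\times B)\times(C\times D)\subset{\cal I}$. Since $\nu$ is equivalent to $\mu\times\mu$, we have $\nu(A\times B)>0$ and $\nu(C\times D)>0$. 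Hence $(\nu\times\nu)({\cal I})>0$, and therefore $\iota(\nu,\nu)>0$. In this argument atoms cause no trouble: we only need arcs of positive measure around four cyclically ordered points of the support, and such points exist because the support is infinite.

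The main obstacle is the identification of $\iota$ with the $\nu\times\nu$-measure of $\Gamma_0\backslash{\cal I}$, together with the verification that positive measure upstairs gives positive measure downstairs. I would quote this identification from \cite{Bo88}, where it is the definition of $\iota$. The second point follows from the fact that ${\cal I}$ is covered by countably many $\Gamma_0$-translates of a Borel fundamental domain and that $\nu\times\nu$ is $\Gamma_0$-invariant.
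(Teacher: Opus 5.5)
Your proof is correct, and it is more self-contained than the paper's. The paper argues in two lines. It quotes the well-known fact that a current with $\iota(\xi,\xi)=0$ is a measured lamination, and then asserts that a measured lamination cannot define the measure class of a product. You instead work directly with Bonahon's definition of $\iota(\nu,\nu)$ as the $\nu\times\nu$-mass of $\Gamma_0\backslash{\cal I}$. You exhibit a product $(A\times B)\times(C\times D)$ of neighborhoods of two linked pairs of support points, and this product has positive mass. Passing to the quotient by a countable cover of ${\cal I}$ with translates of a Borel fundamental domain is fine. In effect you prove both ingredients of the paper's argument at once. The first is the direction of the lamination characterization that is actually needed: if two geodesics in the support of $\nu$ cross, then $\iota(\nu,\nu)>0$. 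The second is why a product class cannot be carried by a lamination: its support $(K\times K)\setminus\Delta$, with $K$ infinite, contains crossing pairs. The paper leaves this second point implicit. Your version also shows that only the infiniteness of $\mathrm{supp}\,\mu$ is used, with no appeal to ergodicity or absence of atoms. The paper's version is shorter but leans on the cited structure result.

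A few cosmetic points:
\begin{itemize}
\item The digression on atoms can simply be deleted.
\item The dichotomy ``all of $S^1$ or a Cantor set'' is not accurate here. The standard action of the cocompact group $\Gamma_0$ on $S^1$ is minimal, so $K=S^1$. Either way you only need $|K|\geq 4$.
\item Working with ordered rather than unordered pairs of endpoints changes $\iota$ at most by a constant factor, which does not affect positivity.
\end{itemize}
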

\begin{proof} A geodesic current $\xi$ with $\iota(\xi,\xi)=0$ is well known to be a measured lamination. 
But a measured lamination does not define the measure class of a product. 
\end{proof}

Given a hyperbolic metric $X$ on $S_g$, we can consider the geodesic flow $\Phi^t$ on the unit tangent
bundle $T^1X$ of $X$. A $\Phi^t$-invariant finite Borel measure on $T^1X$ locally is a product of the 
Lebesgue measure on the orbits and a measure on a transversal, and this construction defines a homeomorphism
between the space of $\Phi^t$-invariant finite Borel measures on $T^1X$, equipped with the 
weak$^*$-topology, and the space of geodesic currents.  It maps ergodic invariant measures to ergodic currents. 
The classical Hopf argument shows (see also Theorem 1.3 and Theorem 1.4 of \cite{K94} which can be used as a
translation of a more general statement). 

\begin{lemma}\label{hopf}
A current $\nu$ defining the measure class of the product $[\mu]\times [\mu]$ of a $\Gamma_0$-invariant ergodic measure class
$[\mu]$ on $S^1$ is ergodic under the diagonal action of $\Gamma_0$. 
\end{lemma}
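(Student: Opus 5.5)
The plan is the classical Hopf argument, run for the flow-invariant measure attached to $\nu$. Via the homeomorphism between geodesic currents and finite $\Phi^t$-invariant Borel measures on $T^1X$ described just before the statement, $\nu$ gives a finite $\Phi^t$-invariant measure $m$ on $T^1X$. Lifted to $T^1\mathbb{H}^2\cong (S^1\times S^1\setminus\Delta)\times\mathbb{R}$, this measure is locally $\nu\times dt$. Ergodicity of the diagonal $\Gamma_0$-action on $(S^1\times S^1\setminus\Delta,\nu)$ is equivalent to ergodicity of $\Phi^t$ with respect to $m$. So it suffices to show that every $\Gamma_0$-invariant measurable function $h$ on $S^1\times S^1\setminus\Delta$ is $\nu$-a.e.\ constant.

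First, reduce to continuous test functions. Take $\phi$ continuous with compact support on $T^1X$ and form its forward and backward Birkhoff averages $\phi^\pm$. Since $m$ is finite, Birkhoff's theorem says these exist and agree $m$-a.e. Uniform continuity of $\phi$ and the exponential contraction of stable horospheres show that $\phi^+$ depends only on the forward endpoint $\xi\in S^1$ of a lift. Likewise $\phi^-$ depends only on the backward endpoint $\eta$. We therefore get $\Gamma_0$-invariant functions $F^+(\xi)$ and $F^-(\eta)$ on $S^1$ with $F^+(\xi)=F^-(\eta)$ for $\nu$-a.e.\ $(\eta,\xi)$.

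The decisive step uses the hypothesis that $\nu$ is in the class of $\mu\times\mu$. A null set of a product measure is null for almost every slice (Fubini). So we may fix $\eta_0$ such that $F^+(\xi)=F^-(\eta_0)$ for $\mu$-a.e.\ $\xi$, which makes $F^+$ $\mu$-a.e.\ constant. Note that ergodicity of $[\mu]$ is not even needed here; the product structure alone forces this. Hence $\phi^+$ is $m$-a.e.\ constant for every continuous compactly supported $\phi$. This class of $\phi$ is dense in $L^1(m)$, and a flow-invariant $L^1$ function is the a.e.\ limit of such averages. Therefore every $\Phi^t$-invariant function is constant, so $\Phi^t$ is ergodic for $m$, and hence $\nu$ is $\Gamma_0$-ergodic.

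The step I expect to be the main obstacle is the claim that $\phi^+$ is constant on stable leaves in the measure-theoretic sense. Genericity holds only $m$-a.e., and one has to check that it transfers along stable horospheres to almost every point with the same forward endpoint. With a product structure this is handled by working on a full-measure set of the form $A\times B$ via Fubini. Alternatively, as the text suggests, one could instead cite Theorems 1.3 and 1.4 of \cite{K94}, which give this Hopf-type dichotomy for product-class currents directly.
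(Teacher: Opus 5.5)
Your proposal is correct and is essentially the paper's proof: the paper simply invokes the classical Hopf argument for the flow-invariant measure $\hat\nu$ on $T^1X$, using its product structure along strong stable and unstable leaves, and your Birkhoff-average argument with Fubini on $S^1\times S^1\setminus\Delta$ is a detailed write-up of exactly that. The step you flag as the main obstacle is in fact harmless: for continuous $\phi$, whether the forward average exists at a vector, and its value, depend only on the vector's forward endpoint at every point (not just almost everywhere), so the measure enters only through the single Fubini step.
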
 
\begin{proof}
Let $\hat \nu$ be y $\Phi^t$-invariant probability measure on $T^1X$ whose corresponding current is contained in the measure
class of $[\mu]\times [\mu]$. Then $\hat \nu$ has a local product structure with respect to the strong stable and strong 
unstable foliation. 
The classical Hopf argument therefore implies that every $\Phi^t$-invariant $\hat \nu$-integrable function is constant almost
everywhere. Hence $\hat \nu$ is ergodic, equivalently 
the current defined by $\hat \nu$ is ergodic under the diagonal action of $\Gamma_0$. 
\end{proof}

If $\nu$ is a current defining the measure class $[\mu]\times [\mu]$, then $\psi_* \nu$ is a current
defining the measure class $[\psi_* \mu]\times [\psi_* \mu]$.
By invariance, we have $\iota(\psi_* \nu,\psi_* \nu)=\iota(\nu,\nu)$. 

\begin{lemma}\label{invariant}
If the measure classes $[\mu]$ and $[\psi_* \mu]$ are not singular, then $\psi_* \nu=\nu$.
\end{lemma}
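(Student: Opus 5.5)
Since $[\mu]$ is ergodic and $[\psi_*\mu]$ is again $\Gamma_0$-invariant and ergodic (because $\psi$ normalizes $\Gamma_0$, acting via $\rho(\psi)$ by conjugation), two such measure classes are either equivalent or mutually singular. Indeed, if $[\psi_*\mu]$ is not singular to $[\mu]$, the Lebesgue decomposition of $\psi_*\mu$ with respect to $\mu$ has a nonzero absolutely continuous part. The density of this part is supported on a $\Gamma_0$-invariant set, since both classes are invariant. By ergodicity of $[\mu]$ this set has full $\mu$-measure, and by ergodicity of $[\psi_*\mu]$ the singular part vanishes. So I first reduce the hypothesis to $[\psi_*\mu]=[\mu]$.

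Next I pass to the product. From $[\psi_*\mu]=[\mu]$ we get $[\psi_*\mu]\times[\psi_*\mu]=[\mu]\times[\mu]$, so $\psi_*\nu$ and $\nu$ are two geodesic currents in the same measure class. By Lemma \ref{hopf}, $\nu$ is ergodic under the diagonal action of $\Gamma_0$. The Radon–Nikodym derivative $d\psi_*\nu/d\nu$ is then a $\Gamma_0$-invariant function: both measures are $\Gamma_0$-invariant as Radon measures, not merely quasi-invariant. By ergodicity it is an a.e. constant $c>0$, so $\psi_*\nu=c\,\nu$.

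It remains to show $c=1$, and here I use the intersection form. By bilinearity (homogeneity) of $\iota$ and its ${\rm Mod}$-invariance, as noted just before the statement,
\[
\iota(\nu,\nu)=\iota(\psi_*\nu,\psi_*\nu)=c^2\,\iota(\nu,\nu).
\]
By Lemma \ref{positiveint}, $\iota(\nu,\nu)>0$, hence $c^2=1$ and $c=1$, i.e.\ $\psi_*\nu=\nu$.

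The main subtle step is the first one: justifying that non-singular implies equivalent. This needs $[\psi_*\mu]$ to be $\Gamma_0$-invariant and ergodic, which follows from the equivariance $\rho(\psi)\gamma\rho(\psi)^{-1}=\psi(\gamma)$ together with $\psi(\Gamma_0)=\Gamma_0$. One also needs care that the invariant density argument works for measure classes rather than for genuinely invariant measures. Alternatively, one can bypass this by working directly with currents. The absolutely continuous part of $\psi_*\nu$ with respect to $\nu$ is nonzero, because $[\mu]\times[\mu]$ and $[\psi_*\mu]\times[\psi_*\mu]$ are non-singular whenever the factors are. That part is an invariant Radon measure, so ergodicity of both currents (Lemma \ref{hopf} applied to $\nu$ and to $\psi_*\nu$) forces $\psi_*\nu=c\nu$ directly.
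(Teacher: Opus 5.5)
Your proof is correct and follows the paper's argument: you reduce non-singularity to equality of the two ergodic classes, deduce $\psi_*\nu=c\nu$ from ergodicity (Lemma \ref{hopf}), and force $c=1$ via $\iota(\psi_*\nu,\psi_*\nu)=\iota(\nu,\nu)>0$. The only cosmetic difference is in the proportionality step: the paper views $\nu$ and $\psi_*\nu$ as equivalent ergodic finite $\Phi^t$-invariant measures on $T^1X$, whereas you use $\Gamma_0$-invariance of the Radon--Nikodym derivative on the space of geodesics, which is the same fact under the correspondence between currents and flow-invariant measures.
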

\begin{proof}
By ergodicity and $\Gamma_0$-invariance of the measure class $[\mu]$, either $[\psi_*\mu]=[\mu]$, or 
$[\mu]$ and $[\psi_*\mu]$ are singular. Thus assume that $[\mu]=[\psi_*\mu]$. Then the currents
$\nu$ and $\psi_*\nu$ are absolutely continuous and hence they define finite $\Phi^t$-invariant measures on $T^1X$ in the same
measure class. 

By Lemma \ref{hopf}, these measures are ergodic under the geodesic flow and hence 
they coincide up to a multiplicative constant, implying that $\psi_*\nu=b\nu$ for a constant $b>0$. 
But $b=1$ as $\iota(b\nu,b\nu)=b^2\iota(\nu,\nu)$ 
and $\iota(\psi_* \nu,\psi_* \nu)=\iota(\nu,\nu)>0$. 
\end{proof}

We use Lemma \ref{invariant} to show.

\begin{proposition}\label{singular10}
  Let $[\mu]$ be a measure class on $S^1$ which is invariant and ergodic under $\Gamma_0$ and induced
  by a current.  Then for any pseudo-Anosov mapping class $\psi$, the measure classes 
$[\mu]$ and $[\psi_*\mu]$ are singular.
\end{proposition}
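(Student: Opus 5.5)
Argue by contradiction, using Lemma \ref{invariant} together with the dynamics of the pseudo-Anosov mapping class $\Pi(\psi)$ on the space of currents. Suppose $[\mu]$ and $[\psi_*\mu]$ are not singular, and let $\nu$ be a current defining $[\mu]\times[\mu]$. By Lemma \ref{invariant}, $\psi_*\nu=\nu$. By Lemma \ref{positiveint}, $\nu$ is not a measured lamination, since $\iota(\nu,\nu)>0$. So it suffices to show that a current fixed by the pseudo-Anosov mapping class $\psi$ must vanish. (The action of $\psi$ on ${\rm Curr}$ factors through $\Pi(\psi)$, since inner automorphisms act trivially on currents.)

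To exclude a nonzero $\psi$-invariant current, use the intersection form with the stable and unstable laminations. By invariance of $\iota$ under ${\rm Mod}(S_g)$,
\[
\iota(\nu,\xi_+)=\iota(\psi_*\nu,\psi_*\xi_+)=\iota(\nu,a\xi_+)=a\,\iota(\nu,\xi_+),
\]
and since $a>1$ this forces $\iota(\nu,\xi_+)=0$. Similarly, $\iota(\nu,\xi_-)=a^{-1}\iota(\nu,\xi_-)$ gives $\iota(\nu,\xi_-)=0$.

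Now use the fact that $\xi_++\xi_-$ binds the surface: the laminations of a pseudo-Anosov fill. Hence $\iota(\eta,\xi_+)+\iota(\eta,\xi_-)>0$ for every nonzero current $\eta$. Equivalently, a current with $\iota(\eta,\xi_+)=0$ is supported on leaves of the support of $\xi_+$, or on geodesics not transverse to it, and in particular is a measured lamination. This contradicts $\iota(\nu,\nu)>0$. One can even avoid the binding statement in full generality: $\iota(\nu,\xi_+)=0$ with $\xi_+$ filling and minimal already forces the support of $\nu$ to lie in the support of $\xi_+$. Since $\xi_+$ is uniquely ergodic, $\nu$ is then a multiple of $\xi_+$, so $\iota(\nu,\nu)=0$.

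The main obstacle is this last step: justifying that $\iota(\nu,\xi_+)=0$ for a filling, minimal, uniquely ergodic lamination $\xi_+$ implies $\nu$ is a multiple of $\xi_+$. My first attempt would be to cite Bonahon's result \cite{Bo88} that $\xi_++\xi_-$ is a binding current, i.e.\ that $\iota(\cdot,\xi_++\xi_-)$ is positive on nonzero currents and the corresponding sublevel set is compact. If that is unavailable, I would argue directly: every geodesic in the support of $\nu$ must be disjoint from the leaves of the support of $\xi_+$ in the transverse sense. Since the complementary regions of a filling lamination are ideal polygons, such a geodesic is either a leaf or a diagonal of a complementary polygon. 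Diagonals are isolated in the space of geodesics, so they contribute only atoms, which are incompatible with a $\Gamma_0$-invariant Radon measure in a product class. Either way $\nu$ is supported on the lamination and has zero self-intersection, contradicting Lemma \ref{positiveint}.
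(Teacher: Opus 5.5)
Your proposal is correct and follows essentially the same route as the paper. Lemma \ref{invariant} gives $\psi_*\nu=\nu$, the scaling identity $\iota(\nu,\xi_+)=a\,\iota(\nu,\xi_+)$ forces $\iota(\nu,\xi_+)=0$, the filling property of $\xi_+$ forces $\nu$ to be carried by the lamination, and then $\iota(\nu,\nu)=0$ contradicts Lemma \ref{positiveint}.

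The paper simply asserts the filling step that you flag as the main obstacle, and your leaves-versus-diagonals argument is a valid way to justify it. One precision: diagonals are isolated only within the set of geodesics not crossing $\xi_+$, not in the whole space of geodesics. Atoms on diagonals are excluded because their $\Gamma_0$-translates accumulate on leaves, which would violate the Radon property. The $\xi_-$ computation and the appeal to unique ergodicity are not needed.
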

\begin{proof}
We know that $[\psi_*\mu]$ is invariant and ergodic under the action of $\Gamma_0$. 
Thus by Lemma \ref{invariant}, if $[\mu]$ and $[\psi_*\mu]$ are
not singular, then a current $\nu$ defining the measure class of $[\mu]\times [\mu]$ coincides with $\psi_*\nu$. 

Let $\xi$ be a measured lamination so that $\psi_*\xi =a\xi$ for some $a>1$. Then we have 
\[\iota(\xi,\nu)=\iota(\psi_*\xi,\psi_*\nu)=\iota(a\xi,\nu)=a \iota(\xi,\nu)\]
and hence $\iota(\xi,\nu)=0$. Since $\psi$ is a pseudo-Anosov mapping class, the support of the measured lamination
$\xi$ fills the surface $S$, and hence $\iota(\xi,\nu)=0$ implies that $\nu$ is supported in $\xi$. 
But then $\iota(\nu,\nu)=0$, 
which contradicts Lemma \ref{positiveint}. 
\end{proof}

\section{Cannon-Thurston maps}

Consider now a closed hyperbolic 3-manifold $M$ which fibers over $S^1$, with fiber a closed surface $S_g$ of genus $g$.
The fundamental group
$\Gamma$ of $M$ fits into an exact sequence
\begin{equation}\label{birman2}
0\to \pi_1(S_g)\to \Gamma\to \mathbb{Z}\to 0.\end{equation}
The monodromy of the fibration, which is the outer automorphism of $\pi_1(S_g)$ 
determined by a generator of the quotient group $\mathbb{Z}$, 
 is a pseudo-Anosov mapping class. Thus Proposition \ref{singular10} shows. 
 
\begin{corollary}\label{singular}
If the measure class $[\mu]$ is induced by a current then 
the measure
classes $[\mu]$ and $[\rho(\psi)_*\mu]$ are singular. 
\end{corollary}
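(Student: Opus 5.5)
This corollary should follow directly from Proposition \ref{singular10}, once we make explicit which element $\psi$ is meant and check that it satisfies the hypothesis there. Choose an element $t\in\Gamma$ that maps to a generator of $\mathbb{Z}$ in the exact sequence (\ref{birman2}). Conjugation by $t$ preserves the normal subgroup $\pi_1(S_g)=\Gamma_0$, so it defines an automorphism $\psi$ of $\Gamma_0$. By the Birman exact sequence (\ref{birman}), such an automorphism is an element $\psi\in{\rm Mod}(S_{g,1})={\rm Aut}(\pi_1(S_g))$. Its image $\Pi(\psi)\in{\rm Mod}(S_g)={\rm Out}(\pi_1(S_g))$ is the monodromy of the fibration. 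Since $M$ is hyperbolic, Thurston's hyperbolization theorem for fibered manifolds tells us that this monodromy is pseudo-Anosov. Hence $\psi$ is an element of the kind considered in Section \ref{sec:circle}, and $\rho(\psi)$ is the H\"older homeomorphism of $S^1$ constructed there.

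The next step is to match notation. In Proposition \ref{singular10}, $\psi_*\mu$ means the push-forward of $\mu$ under the circle homeomorphism attached to $\psi$, namely $\rho(\psi)$. So $[\psi_*\mu]=[\rho(\psi)_*\mu]$. The hypotheses are the same as in the corollary: $[\mu]$ is $\Gamma_0$-invariant and ergodic, and it is induced by a current. I would note that ergodicity is part of the standing setup and is used in Lemma \ref{invariant}. Applying Proposition \ref{singular10} then gives the singularity claimed in the corollary.

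The choice of lift $t$ does not matter. Two lifts differ by an element of $\Gamma_0$, and possibly by passing to $t^{-1}$. Changing $t$ by an element $\gamma\in\Gamma_0$ changes $\rho(\psi)$ by $\rho(\gamma)$, which preserves $[\mu]$. Replacing $t$ by $t^{-1}$ is also harmless, because the inverse of a pseudo-Anosov is pseudo-Anosov, and in any case $[\mu]\perp[\rho(\psi)_*\mu]$ holds if and only if $[\rho(\psi)^{-1}_*\mu]\perp[\mu]$.

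The only step with real content is identifying the automorphism induced by $\Gamma$ as an element of ${\rm Mod}(S_{g,1})$ whose projection is pseudo-Anosov. This is standard, by Thurston's theorem, so I expect no serious obstacle. The substantive work has already been done in Proposition \ref{singular10}.
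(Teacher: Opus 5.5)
Your proposal is correct and matches the paper's argument: the paper likewise observes that the monodromy, realized by conjugation by a lift of a generator of $\mathbb{Z}$ as an element $\psi\in{\rm Mod}(S_{g,1})$ with $\Pi(\psi)$ pseudo-Anosov, satisfies the hypothesis of Proposition \ref{singular10}, and it concludes directly from that proposition with $\psi_*\mu=\rho(\psi)_*\mu$. Your extra check that the choice of lift does not matter is correct but not needed for the argument.
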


The group $\Gamma$ is hyperbolic and quasi-isometric to hyperbolic 3-space and hence its Gromov boundary
equals the two-sphere $S^2$. 
In spite of the fact that the subgroup $\Gamma_0=\pi_1(S_g)$ of $\Gamma$ is exponentially distorted, 
Cannon and Thurston showed that there is a continuous, $\Gamma_0$-equivariant, finite-to-one
map
$F:S^1\to S^2$ \cite{CT07}.  
See also \cite{CL26} for a comprehensive account.

These fibers can be understood explicitly. 
Namely, consider the equivalence relation $\sim$ on $S^1$ defined by $x\sim y$ if and only if $F(x)=F(y)$.
Following \cite{CL26}, this relation is the union of two disjoint (no perfect fits) \emph{laminar} relations
in the sense of Definition 1.16 of \cite{CL26}:
Its equivalence classes are closed subsets of $S^1$, the relation is closed in the space of unordered distinct pairs
of points, and distinct equivalence classes are \emph{unlinked}. As the relation is invariant under the action of
$\Gamma_0$, this can be thought of as follows. If $F(x)=F(y)$ with $x\not=y\in S^1$, then 
the geodesic joining $x$ to $y$ projects to a simple geodesic 
in $S_g=\Gamma_0\backslash \mathbb{H}^2$ (for any discrete embedding of $\Gamma_0$ into 
${\rm PSL}(2,\mathbb{R})$).  
Hence $F$ is injective on the Borel set $\Omega\subset S^1$  of all points which are \emph{not} endpoints of 
lifts of \emph{simple} geodesics in $S_g$. We need the following proposition.

\begin{proposition}\label{nonsimplefull}
  Let $[\mu]$ be a $\Gamma_0$-invariant ergodic measure class on $S^1$ 
induced by a current. Then $[\mu]$ gives measure zero to the set of endpoints of lifts of simple geodesics in $S_g$. 
\end{proposition}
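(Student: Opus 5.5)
Let $E\subset S^1$ be the set of endpoints of lifts of simple geodesics in $S_g$. The idea is to pass from $[\mu]$ to the current $\nu$ in the class $[\mu]\times[\mu]$ and argue that if $\mu(E)>0$, then $\nu$ gives positive mass to a set of geodesics with too little room to have positive self-intersection. First, $E$ is a $\Gamma_0$-invariant Borel set: it is the union of the endpoints of the countably many lifts of closed simple geodesics, together with the endpoints of leaves of geodesic laminations. Each lamination lies in the closed set $\mathcal{L}\subset (S^1\times S^1)\setminus\Delta$ of geodesics whose projection to $S_g$ is simple, and $E$ is the projection of $\mathcal{L}$ to the first factor. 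Ergodicity of $[\mu]$ then reduces the claim to ruling out $\mu(E)>0$, since in that case $E$ has full measure.

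Assume $\mu(E)=1$. Then $\nu$ gives full mass to $E\times E$. I would split $E$ into two parts.

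\textbf{The countable part.} It consists of endpoints of lifts of closed geodesics, and there are countably many such points. If an atom carried positive mass, $\Gamma_0$-quasi-invariance together with ergodicity would force $\mu$ to be carried by a single orbit. Then $\mu\times\mu$ would be carried by a countable set of pairs. A Radon current carried by countably many pairs is a sum of weighted closed geodesics, and it cannot be in the class of a product of atomic measures on an orbit: the product charges pairs $(\gamma x,\eta x)$ for arbitrary $\gamma,\eta$, and $\Gamma_0$-invariance of $\nu$ would give infinite mass to a compact set. So this part is $\mu$-null.

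\textbf{The lamination part.} Any leaf of a geodesic lamination shares endpoints only with leaves of the same lamination, and each endpoint is an endpoint of at most finitely many leaves (coming from complementary ideal polygons). The key step is to show that $\nu$-almost every pair $(x,y)\in E\times E$ spans a geodesic which has transverse intersections with its translates. I would argue this by contradiction. If $\nu$-almost every geodesic is simple, then $\nu$ is supported on $\mathcal{L}$. By positivity of $\iota$ on self-intersections, a current supported on simple geodesics with $\iota(\nu,\nu)>0$ is impossible, because $\iota(\nu,\nu)$ counts pairs of transversely crossing geodesics in the support. This contradicts Lemma \ref{positiveint}.

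\textbf{Main obstacle.} The remaining and hardest case is that $\nu$-typical pairs $(x,y)$ with $x,y\in E$ span non-simple geodesics. Here $x$ and $y$ are endpoints of leaves $\ell_x$ and $\ell_y$ of possibly different laminations, and I must still derive a contradiction from the product structure. My approach is a Fubini argument combined with recurrence of the geodesic flow.
\begin{enumerate}
\item By product structure, for $\mu$-a.e. $x$ the conditional measure on $y$ is equivalent to $\mu$.
\item Use the Hopf-type ergodicity of Lemma \ref{hopf}. Flowing forward toward $x\in E$, a typical geodesic ray ending at $x$ is asymptotic to the leaf $\ell_x$ whose projection is simple.
\item Consequently, the geodesic flow orbit in $T^1X$ spends asymptotically all its time near a lamination, which has zero area.
\item Birkhoff's theorem applied to the finite ergodic measure $\hat\nu$ then forces $\hat\nu$ to be supported on the lamination's unit tangent set. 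Since $x$ is a generic forward endpoint for $\hat\nu$, this set is closed and invariant, and $\hat\nu$ gives positive mass to every neighbourhood of it.
\end{enumerate}
This again makes $\nu$ a measured lamination, so $\iota(\nu,\nu)=0$, contradicting Lemma \ref{positiveint}. Checking that forward asymptoticity to a simple leaf pins down the full support of $\hat\nu$ is where the difficulty lies.
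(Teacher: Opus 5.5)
Your argument under \emph{Main obstacle} is the paper's proof. By Fubini and Lemma \ref{hopf}, some $x\in E$ is the forward endpoint of a $\hat\nu$-Birkhoff-regular geodesic. That geodesic is forward asymptotic to a lift of a simple geodesic, so its orbit eventually stays in every neighbourhood of the compact set $T^1\mathcal{L}$.

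The step you flag as the difficulty is routine. Along this orbit, the time averages of any continuous function vanishing near $T^1\mathcal{L}$ tend to zero, so $\hat\nu(T^1\mathcal{L})=1$. The paper finishes differently, using full support of $\hat\nu$ and an explicit open set $B$ disjoint from $T^1\mathcal{L}$, whereas you finish with $\iota(\nu,\nu)=0$ and Lemma \ref{positiveint}; both endings work.

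The case analysis before \emph{Main obstacle} (countable part, ``key step'') is unnecessary. The Birkhoff argument already covers closed simple geodesics and does not depend on whether typical geodesics are simple. Two claims made there are also false as stated, so you should drop them:
\begin{enumerate}
\item Lifts of simple geodesics lying in no common lamination can share an endpoint, for instance geodesics spiralling onto the same closed curve.
\item A current carried by simple geodesics can have positive self-intersection, for instance a sum of two crossing simple closed curves.
\end{enumerate}
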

\begin{proof} By Lemma \ref{hopf}, 
a current $\nu$ inducing $[\mu]$ is ergodic under the action of $\Gamma_0$, equivalently, the invariant measure it defines on 
$T^1S$ is ergodic under the action of the geodesic flow $\Phi^t$. 
Then 
$[\mu]$-almost every $z\in S^1$ is the endpoint of a geodesic 
whose tangent line projects to a Birkhoff regular orbit for the geodesic flow in $T^1S_g$,
equipped with the finite invariant probability measure $\hat \nu$ corresponding to (a multiple of) $\nu$. 
We have to show that such a point cannot be the endpoint of a lift of a simple geodesic on $S_g$. 

To see this assume otherwise. Then there are geodesics $\gamma_1,\gamma_2\subset \mathbb{H}^2$ with the same
endpoint $z\in S^1$ so that the tangent line $\gamma_1^\prime$ of $\gamma_1$ is Birkhoff regular for the measure $\hat \nu$, and the 
projection of the geodesic $\gamma_2$ to $S_g$ is simple. Thus, for suitable parameterizations of $\gamma_1,\gamma_2$ by arc length
and the Sasaki metric $d_S$ on $T^1\mathbb{H}^2$, it holds
\[d_S(\gamma_1^\prime(t),\gamma_2^\prime(t))\leq Ce^{-t}\]
for a constant $C>0$, and the same holds true for the projections of $\gamma_1^\prime,\gamma_2^\prime$ 
to $T^1S_g$.

Let $P:\mathbb{H}^2\to X$ be the canonical projection.
Since $P\gamma_2$ is simple, its closure  ${\mathcal L}$ in $S_g$ is a geodesic lamination. The subset 
$T^1{\mathcal L}$ 
of $T^1S_g$ of all unit tangent vectors to geodesics in ${\mathcal L}$ is a closed subset of $T^1S_g$, intersecting
the fiber of $T^1S_g$ over a point in ${\mathcal L}$ in precisely two antipodal points, and it is the closure
of the unit tangents (for both orientations) of the geodesic $P\gamma_2$. In particular, it is compact. 
As a consequence of the previous paragraph, 
for any $\alpha >0$ 
there then exists some $t(\alpha)>0$ such that whenever 
$t>t(\alpha)$,
the Sasaki distance
between $P\gamma_1^\prime(t)$ and the compact set $T^1{\mathcal L}$ is at most $\alpha$. 

Choose a small compact arc $I\subset \gamma_2$ and consider the set $A\subset T^1S_g$ of all unit tangent vectors with 
footpoint in $I$ which make an angle of at least $\pi/4$ to the tangent of $\gamma_2$. 
This set is a compact transversal for the geodesic flow which is disjoint from $T^1{\mathcal L}$. 
As a consequence, there is a number $\epsilon >0$ so that the compact set $B=\cup_{-\epsilon \leq t\leq \epsilon}\Phi^tA$ is disjoint from 
the compact set $T^1{\mathcal L}$ as well and hence has positive Sasaki distance $\alpha >0$ from $T^1{\mathcal L}$.

Now note that the invariant probability measure $\hat \nu$ 
corresponding to the current 
$\nu$ which defines the measure class of $\mu\times \mu$ gives positive measure to every open subset of $T^1S_g$. 
As $B$ contains an open subset of $T^1S_g$ and $\gamma_1^\prime(0)$ is a Birkhoff regular point
for the geodesic flow $\Phi^t$ on $T^1S_g$ with respect to  $\hat \nu$,
the forward $\Phi^t$-orbit of $(P \gamma_1)^{\prime}(t(\alpha))$, which is just the tangent line 
$(P\gamma_1)^\prime[t(\alpha),\infty)$, is dense in $T^1S_g$ 
and hence it intersects
$B$ for arbitrarily large time. This 
is a contradiction which complete the proof.
 \end{proof}

Together this yields Theorem \ref{main2} and Corollary \ref{nonextendable}.

\begin{proposition}\label{cannon}
Let $[\mu]$ be a $\Gamma_0$-invariant ergodic measure class on $S^1$ 
induced by a current. 
Then the push-forward
$F_*[\mu]$ of $[\mu]$ under a Cannon-Thurston map $F$ is singular with respect to 
any $\Gamma$-invariant totally ergodic measure class on $S^2$. 
\end{proposition}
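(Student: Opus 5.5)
The strategy is to argue by contradiction. Suppose $[\lambda]$ is a $\Gamma$-invariant measure class on $S^2$ whose $\Gamma$-action is totally ergodic, and suppose $F_*[\mu]$ and $[\lambda]$ are not singular. We use the monodromy element $\psi\in\Gamma$: it normalizes $\Gamma_0$, and $\rho(\psi)$ induces the action of $\psi$ on the boundary. The first step records the compatibility of $F$ with $\psi$. By the Cannon--Thurston construction, equivariance upgrades to $F\circ\rho(\psi)=\psi\circ F$, where on the right $\psi$ acts on $S^2$ as an element of $\Gamma$. This holds because both sides are continuous and are equivariant with respect to the twisted $\Gamma_0$-action, and they agree on the dense set of fixed points of elements of $\Gamma_0$. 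Consequently $F_*(\rho(\psi)_*\mu)=\psi_*F_*\mu$.

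Next we pass from $S^1$ to $S^2$ using injectivity. By Proposition \ref{nonsimplefull}, $[\mu]$ is carried by $\Omega$, on which $F$ is injective. The same holds for $\rho(\psi)_*\mu$, since $\rho(\psi)$ preserves the set of endpoints of lifts of simple geodesics. Indeed, simple geodesics correspond to unlinked pairs, and this property is preserved by the homeomorphism $\rho(\psi)$, which respects the cyclic order. Because $F$ restricted to the Borel set $\Omega$ is injective, it is a Borel isomorphism onto its image. Hence singularity is preserved under push-forward. Combined with Corollary \ref{singular}, this gives that $F_*[\mu]$ and $\psi_*F_*[\mu]$ are singular. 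Iterating, $F_*[\mu]$ and $\psi^k_*F_*[\mu]$ are singular for every $k\neq 0$, by Proposition \ref{singular10} applied to the pseudo-Anosov $\psi^k$.

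Now we bring in ergodicity. The class $F_*[\mu]$ is $\Gamma_0$-invariant and $\Gamma_0$-ergodic, since $F$ is a $\Gamma_0$-equivariant Borel isomorphism on a full-measure set. Write $\lambda=\lambda_a+\lambda_s$ with $\lambda_a\ll F_*\mu$ nonzero. By ergodicity of $F_*[\mu]$ and $\Gamma_0$-invariance of $[\lambda]$, we may take $\lambda_a$ equivalent to $F_*\mu$. Thus there is a Borel set $E$ with $\lambda(E)>0$ such that $[\lambda|_E]=[F_*\mu]$, and $E$ can be chosen $\Gamma_0$-invariant by saturating. The translates $\psi^kE$ for $k\in\mathbb{Z}$ carry the classes $\psi^k_*F_*[\mu]$, which are pairwise singular. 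After removing null sets, the $\psi^kE$ are therefore pairwise disjoint $\Gamma_0$-invariant sets.

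Finally we produce a finite factor. Since $\Gamma=\Gamma_0\rtimes\langle\psi\rangle$, the set $\bigcup_k\psi^kE$ is $\Gamma$-invariant, and by ergodicity it has full measure. Let $\Gamma_2=\Gamma_0\rtimes\langle\psi^2\rangle$, a subgroup of index $2$. The set $\bigcup_k\psi^{2k}E$ is $\Gamma_2$-invariant, and it has positive measure with complement of positive measure, namely $\psi E$. This contradicts total ergodicity. I expect the main obstacle to be the careful justification that $F$ intertwines $\rho(\psi)$ with $\psi$, which needs a lift of $\psi$ to $\Gamma$ that matches $\rho(\psi)$ as a representative in $\mathrm{Mod}(S_{g,1})$, together with the Borel-isomorphism step on $\Omega$. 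The invariance of $\Omega$ under $\rho(\psi)$ should be routine.
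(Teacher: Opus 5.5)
Your proposal is correct and follows the same route as the paper. The paper uses Proposition \ref{nonsimplefull} to get injectivity of $F$ on a full-measure set, Corollary \ref{singular} to get singularity of $F_*[\mu]$ and $\psi_*F_*[\mu]$, and then a nontrivial set invariant under the index-two subgroup generated by $\Gamma_0$ and $\psi^2$. Your added details make explicit what the paper's short proof leaves implicit: the intertwining relation $F\circ\rho(\psi)=\psi\circ F$, the $\rho(\psi)$-invariance of $\Omega$, and Proposition \ref{singular10} applied to every power $\psi^k$ so that the translates $\psi^kE$ are pairwise disjoint. The last point is what justifies that the index-two subgroup in particular fails to act ergodically.
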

\begin{proof}[Proof of Theorem \ref{main2}]
As a consequence of Proposition \ref{nonsimplefull}, the measure class $[\mu]$ gives full mass to the Borel set of points 
on $S^1$ on which the map $F$ is injective, and the same holds true for $[\psi_*\mu]$. As a consequence of 
Corollary \ref{singular}, the measure classes $F_*\mu$ and $F_*\psi_*\mu$ are singular. 
Thus, if $\eta$ is a $\Gamma$-invariant ergodic measure class on $S^2$ which is 
not singular with respect to $F_*[\mu]$, then the finite index subgroup of $\Gamma$ generated 
by $\Gamma_0$ and $\psi^2$ preserves a nontrivial measurable set and hence indeed, such a measure 
is not totally ergodic, that is, it admits a finite factor. 
\end{proof}

\begin{corollary}\label{noin}
The Lebesgue measure on $S^2$ defines a $\Gamma_0$-invariant measure class $[\mu]$ 
on $S^1$ so that $[\mu]\times [\mu]$ is ergodic under the diagonal action of $\Gamma_0$, but 
there is no geodesic current defining $[\mu]\times [\mu]$.
\end{corollary}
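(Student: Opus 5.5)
The plan is to pull back the Lebesgue measure class $\lambda$ on $S^2$ along $F$. Since $F$ is finite-to-one and its non-injectivity locus is contained in the countable union of endpoints of lifts of simple geodesics, I would define $[\mu]$ on $S^1$ by restricting $\lambda$ to $F(\Omega)$, where $\Omega$ is the Borel set of points that are not endpoints of lifts of simple geodesics, and pulling it back via the Borel bijection $F|_\Omega:\Omega\to F(\Omega)$. The first thing to check is that $\lambda(S^2\setminus F(\Omega))=0$, so that $F_*[\mu]=[\lambda]$. The complement $F(S^1\setminus\Omega)$ is the image under $F$ of the endpoints of leaves of the stable and unstable laminations, which are the only non-injectivity points, together with all the other simple geodesics. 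That image should be Lebesgue null: it is a $\Gamma_0$-invariant set whose positive measure would, by ergodicity of Lebesgue under $\Gamma$ (or even under $\Gamma_0$, via ergodicity of the geodesic flow of the doubly degenerate cover), force full measure. I would rule full measure out by noting that Lebesgue-typical points are conical limit points for the fiber group in the degenerate cover. That means they are not endpoints of the ending laminations, hence have singleton fibers, and endpoints of other simple leaves are countably many $\Gamma_0$-orbits of small sets. With this, $[\mu]$ is a $\Gamma_0$-invariant measure class and $F_*[\mu]=[\lambda]$.

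Next I prove (1). Under the identification $F\times F$, the product $[\mu]\times[\mu]$ corresponds to $[\lambda]\times[\lambda]$ on $S^2\times S^2\setminus\Delta$, restricted to $F(\Omega)\times F(\Omega)$. Ergodicity of $\lambda\times\lambda$ under the diagonal action of $\Gamma_0$ is ergodicity of the geodesic flow on the unit tangent bundle of the infinite-volume doubly degenerate manifold $\mathbb{H}^3/\Gamma_0$ with respect to the Liouville measure. I would cite the Hopf--Tsuji--Sullivan dichotomy for this. Since $\Gamma_0$ is of divergence type and its limit set is all of $S^2$ (Thurston and Sullivan for geometrically infinite groups whose Lebesgue limit set has positive measure, the Ahlfors conjecture), the Liouville geodesic flow is conservative and ergodic. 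Hence $[\mu]\times[\mu]$ is diagonally ergodic. In particular $[\mu]$ itself is ergodic.

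Then (2) follows by contradiction from Theorem \ref{main2}, or more precisely from Proposition \ref{cannon}. Suppose a geodesic current defines $[\mu]\times[\mu]$. Then $[\mu]$ is an ergodic $\Gamma_0$-invariant measure class induced by a current, so $F_*[\mu]$ is singular to every totally ergodic $\Gamma$-invariant measure class on $S^2$. But $F_*[\mu]=[\lambda]$, and Lebesgue measure is totally ergodic for $\Gamma$: every finite index subgroup of $\Gamma$ is again a cocompact lattice in ${\rm PSL}(2,\mathbb{C})$, and cocompact Kleinian groups act ergodically on $S^2$ with respect to Lebesgue measure. A measure class cannot be singular to itself, which gives the contradiction.

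The main obstacle is the first step: showing that Lebesgue measure gives zero mass to the image of the endpoints of lifts of simple geodesics, so that $[\mu]$ is well defined and pushes forward exactly to $[\lambda]$. I would try to deduce it from the conical-limit-point (divergence type) property of $\Gamma_0$ acting on $\mathbb{H}^3$. Lebesgue-almost every point is a conical limit point of $\Gamma_0$ and has recurrent geodesic rays in $\mathbb{H}^3/\Gamma_0$. I would then argue that a ray ending at $F(z)$ for $z$ an endpoint of a simple leaf must exit an end of the manifold, by the Cannon--Thurston/ending lamination description, which contradicts recurrence.
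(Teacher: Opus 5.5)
Your treatment of parts (1) and (2) of the statement is the paper's argument: you transfer diagonal ergodicity of $[\lambda]\times[\lambda]$ under $\Gamma_0$ through $F$, and rule out a current via Proposition \ref{cannon}, since $F_*[\mu]=[\lambda]$ is totally ergodic. The gap is in the step you flag as the main obstacle. There you demand $\lambda(F(S^1\setminus\Omega))=0$, i.e.\ that the images of endpoints of \emph{all} lifts of simple geodesics are Lebesgue null. The argument you sketch for this fails: it is false that the ray toward $F(z)$ must leave every compact set of $\mathbb{H}^3/\Gamma_0$ when $z$ is an endpoint of a simple leaf. If $z$ is the attracting fixed point of some $g\in\Gamma_0$ representing a simple closed curve, then $F(z)$ is the attracting fixed point of the loxodromic $g$ on $S^2$, hence conical. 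This is not only a countable phenomenon. Take a nonsingular straight ray $r$ of the flat structure of the monodromy, in a direction other than the stable and unstable ones. It is a leaf of a measured foliation, so its endpoint $z$ lies in $S^1\setminus\Omega$. In the singular Sol metric $e^{2t}dx^2+e^{-2t}dy^2+dt^2$ on $\tilde S\times\mathbb{R}$, the flow plane $r\times\mathbb{R}$ carries the metric $(e^{2t}a^2+e^{-2t}b^2)\,du^2+dt^2$. This metric is bi-Lipschitz to $\mathbb{H}^2$, with a geodesic at a fixed height. Being a Mitra ladder, the flow plane is quasi-convex, so the ray toward $F(z)$ stays at bounded height and $F(z)$ is conical. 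Thus $F(S^1\setminus\Omega)$ contains uncountably many conical points, and recurrence cannot separate them from Lebesgue-typical points. Your remark that the remaining endpoints form ``countably many $\Gamma_0$-orbits of small sets'' is also wrong: there are uncountably many simple geodesics outside the ending laminations.

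You do not need this strengthening. Pull back along $F$ restricted to the Borel set $\Omega'\supset\Omega$ of points with a single preimage, on which $F$ is injective. That $\lambda(S^2\setminus F(\Omega'))=0$ follows from what you already say. Lebesgue-almost every point is conical, and conical points have singleton fibers, because images of endpoints of leaves of the stable and unstable laminations are not conical. This is exactly the input the paper takes from Proposition 5.5 of \cite{FMP26}. If the resulting $[\mu]$ were induced by a current, Proposition \ref{nonsimplefull} would give $\mu(S^1\setminus\Omega)=0$ automatically. Proposition \ref{cannon} then applies verbatim and makes $[\lambda]$ singular to itself. With this change your proof coincides with the paper's. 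The only difference is that you justify the single-preimage property through conical limit points rather than by citation. Your justification of ergodicity of the Liouville flow on $\mathbb{H}^3/\Gamma_0$ is muddled, since a full limit set does not by itself give divergence type. The fact is nevertheless standard for this tame $\mathbb{Z}$-cover of $M$, and the paper likewise cites it.
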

\begin{proof}
The Lebesgue measure class $[\lambda]$ on $S^2$ is well known to be totally ergodic under the action of $\Gamma$, moreover 
the measure class $[\lambda]\times [\lambda]$ is ergodic under the diagonal action of $\Gamma_0$ 
\cite{C01,H02}. 

By Proposition 5.5 of \cite{FMP26}, $[\lambda]$ 
gives full measure to the set of points in $S^2$ with only one preimage under $F$. This allows to pull the 
measure back to a measure on $S^1$ whose measure class $[F^*\lambda]$ is invariant under the action of $\Gamma_0$. 
We refer to Section 5 of \cite{FMP26} for details. By ergodicity of $[\lambda]\times [\lambda]$ under
the diagonal action of $\Gamma_0$, the same holds true 
for the measure class $[F^*\lambda]\times [F^*\lambda]$. 
However, by Proposition \ref{cannon}, $[\lambda]$  can not be induced by a current. 
 \end{proof}

\begin{remark}
Corollary \ref{noin} easily extends to measure classes on $S^2$ which are defined by Gibbs equilibrium states for the geodesic flow
on the unit tangent bundle of $M$. However, such a statement relies on Proposition 5.5 of \cite{FMP26} whose proof extends, 
but which is not stated in this generality. 
\end{remark}

\noindent
Math. Inst. Univ. Bonn, Endenicher Allee 60, 53115 Bonn, Germany\\
email: ursula@math.uni-bonn.de

\end{document}